\newcommand{\mylabel}[2]{#2\def\@currentlabel{#2}\label{#1}}
\DeclareMathOperator{\I}{I}
\newcommand{\A}{\alpha}
\DeclareMathOperator{\avi}{avi}
\DeclareMathOperator{\Ti}{T}
\DeclareMathOperator{\ii}{i}
\newcommand{\changefont}{%
    \fontsize{9}{12}\selectfont}
\newtheorem{thm}{Theorem}[section]
\newtheorem{prop}[thm]{Proposition}
\newtheorem{lem}[thm]{Lemma}
\newtheorem{cor}[thm]{Corollary}
\theoremstyle{remark}
\newtheorem{exa}[thm]{Example}
\newtheorem*{thm*}{Theorem}
\newif\ifdetails
\newcommand{\DETAIL}[1]%
{\ifdetails\par\fbox{\begin{minipage}{0.9\linewidth}\textit{Detail:}
      #1\end{minipage}}\par\fi}
\newcommand{\TODO}[1]%
{\ifdetails\par\fbox{\begin{minipage}{0.9\linewidth}\textbf{TODO:}
      #1\end{minipage}}\par\fi}
\newcommand{\old}[1]{{}}
\title{The average size of independent sets of graphs}
\author{Eric O. D. Andriantiana}
\address{Eric O. D. Andriantiana\\
Department of Mathematics (Pure and Applied)\\
Rhodes University, PO Box 94\\
6140 Grahamstown\\
South Africa}
\email{E.Andriantiana@ru.ac.za}
\author{Valisoa Razanajatovo Misanantenaina}
\address{Valisoa Razanajatovo Misanantenaina\\
Department of Mathematical Sciences\\
Stellenbosch University\\
Private Bag X1\\
Matieland 7602\\
South Africa}
\email{valisoa@sun.ac.za}
\author{Stephan Wagner}
\address{Stephan Wagner\\
Department of Mathematical Sciences\\
Stellenbosch University\\
Private Bag X1\\
Matieland 7602\\
South Africa}
\email{swagner@sun.ac.za}
\thanks{This work was supported by the National Research Foundation of South Africa (grants 96236 and 96310).}
\subjclass[2010]{Primary 05C35; secondary 05C05, 05C07}
\keywords{Independent sets, average size, trees, extremal problems}
\begin{document}

\begin{abstract}
In this paper, we study the average size of independent (vertex) sets of a graph. This invariant can be regarded as the logarithmic derivative of the independence polynomial evaluated at $1$. We are specifically concerned with extremal questions. The maximum and minimum for general graphs are attained by the empty and complete graph respectively, while for trees we prove that the path minimises the average size of independent sets and the star maximises it. While removing a vertex does not always decrease the average size of independent sets, we prove that there always exists a vertex for which this is the case.
\end{abstract}

\maketitle

\section{Introduction}

The number of independent sets is a graph invariant that has been studied extensively. It has been dubbed the \emph{Fibonacci number} of a graph \cite{prodinger1982fibonacci} due to the fact that the number of independent sets in a path is always a Fibonacci number, and is now known as \emph{Merrifield-Simmons index} in mathematical chemistry in honour of the work of chemists Merrifield and Simmons \cite{merrifield1989topological}. Moreover, its connection to the \emph{hard core model} in statistical physics is well established (see~\cite{brightwell2002hard} for a general reference).

Extremal problems (concerned with finding maximum or minimum values) regarding the number of independent sets have been of particular interest, especially in the aforementioned context of mathematical chemistry. Graphs with various restrictions have been studied, as well as graph classes such as trees, unicyclic or bicyclic graphs; see \cite{wagner2017upper} for a recent survey.

In this paper, we study similar questions for the average size of independent sets. This invariant, while interesting in its own right, comes up in various contexts: in \cite{davies2018average}, an asymptotic lower bound is given for triangle-free graphs, which can be used to obtain bounds on Ramsey numbers. In \cite{davies2017independent}, the same authors established an upper bound as a tool to prove that the disjoint union of complete bipartite graphs $K_{d,d}$ maximises the number of independent sets of a $d$-regular graph.

An invariant of a similar nature is the average order of a subtree, as introduced to the literature by Jamison \cite{jamison1983average,jamison1984monotonicity}.  In particular, he proved that the average order of subtrees of an $n$-vertex tree is at least $(n+2)/3$, with equality for the path, which parallels our Theorem~\ref{thm:path}.
There has been a fair amount of recent activity around this invariant \cite{vince2010average,haslegrave2014extremal,wagner2016local,Mol} and its generalisations \cite{stephens2018mean}.

This paper is structured as follows: in the following section, we collect some basic results that will be needed for our analysis. In Section \ref{general}, we consider the behaviour under removal of vertices or edges. It turns out that the average size of independent sets is not monotone under these operations, as we will show by explicit counterexamples. This is in contrast to the number of independent sets, but also to the aforementioned average subtree order~\cite{jamison1984monotonicity}. However, we prove that it is always possible to find a vertex whose removal decreases the average size of independent sets. We also show that---not unexpectedly---the empty and complete graph attain the maximum and minimum respectively among graphs of a given order. 
Finally, we focus on trees in Section \ref{trees}, where it is shown that the path and the star are extremal. While the proof for the star is fairly short and straightforward, the situation for the path is much more complex.
The paper concludes with a brief discussion of a generalised invariant.

\section{Preliminaries}

Let $G$ be a graph, and let $\ii(G,k)$ be the number of independent sets of size $k$.
The independence polynomial of $G$ is defined by $$\I(G,x)=\sum_{k}\ii(G,k)x^k,$$ 
see \cite{levit2005independence} for a survey on the independence polynomial and its properties. The total number of independent subsets 
of $G$ is simply the value of the independence polynomial at $1$:
$$
\I(G,1)=\sum_{k}\ii(G,k),
$$
and the first derivative, evaluated at $x=1$, is
$$
\I'(G,1)=\sum_{k}k\ii(G,k).
$$
Hence the average size of independent vertex subsets in $G$ is the logarithmic derivative
$$
\avi(G)=\frac{\I'(G,1)}{\I(G,1)}.
$$

For ease of notation, we will write $\I(G)$ instead of $\I(G,1)$, as well as $\Ti(G)$ instead of $\I'(G,1)$ (total size of all independent sets).

\begin{exa}
Let us compute the average size of independent sets of the $n$-vertex edgeless graph $E_n$ and star $S_n$. We have $$\I(E_n,x)=(1+x)^n, \quad \I(S_n,x)=(1+x)^{n-1}+x,$$ which give
$$\I(E_n)=2^{n} \text{ and } \I(S_n)=2^{n-1}+1,$$ 
$$\Ti(E_n)=n2^{n-1} \text{ and } \Ti(S_n)=(n-1)2^{n-2}+1,$$
and hence
$$
\avi(E_n)=\frac{n}{2}, \quad \avi(S_n)=\frac{n-1}{2}+\frac{3-n}{2^{n}+2}.
$$
\end{exa}

The following standard recursion, which is obtained by distinguishing independent sets containing a vertex $v$ and those not containing it, is very useful in calculating the independence polynomial of graphs.

\begin{prop}\label{cha1.pro1}
Let $v$ be a vertex of $G$ and $N[v]=\{u| uv \in E(G)\}\cup\{v\}$ its closed neighbourhood. We have
\[\I(G,x)=\I(G-v,x)+x\I(G-N[v],x).\]
\end{prop}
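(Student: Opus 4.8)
The plan is to prove the identity by partitioning the independent sets of $G$ according to whether or not they contain the distinguished vertex $v$, and then identifying each class with the independent sets of a smaller graph. The cleanest way to do this is to work with the generating-function form of the independence polynomial rather than its coefficients: I would first observe that
\[
\I(G,x)=\sum_{S}x^{|S|},
\]
where $S$ ranges over all independent sets of $G$, including the empty set. This reformulation follows immediately from the definition $\I(G,x)=\sum_k \ii(G,k)x^k$ by grouping the independent sets according to their size, and it makes the subsequent combinatorial argument transparent.

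Next I would split the sum into the independent sets avoiding $v$ and those containing $v$. For the first class, I would note that a subset $S\subseteq V(G)\setminus\{v\}$ is independent in $G$ if and only if it is independent in $G-v$, since deleting $v$ removes no relevant adjacencies among the remaining vertices; hence the contribution of these sets is exactly $\I(G-v,x)$. For the second class, the key observation is that if $v\in S$ and $S$ is independent, then $S$ can contain no neighbour of $v$, so $S\setminus\{v\}$ is an independent set of $G-N[v]$. Conversely, every independent set $T$ of $G-N[v]$ yields an independent set $T\cup\{v\}$ of $G$ containing $v$. This correspondence is a bijection that increases the size by exactly one, so these sets contribute $x\,\I(G-N[v],x)$. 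Adding the two contributions gives the stated recursion.

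There is no serious obstacle here, as the result is an elementary counting identity; the only point requiring care is verifying that the map $T\mapsto T\cup\{v\}$ is indeed a well-defined bijection onto the independent sets of $G$ containing $v$. The essential reason the closed neighbourhood $N[v]$ (rather than merely $v$) must be deleted in the second term is precisely that independence after adjoining $v$ forces the exclusion of all of $v$'s neighbours, and deleting them is what guarantees the reverse direction of the bijection produces a genuinely independent set.
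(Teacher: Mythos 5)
Your proof is correct and follows exactly the approach the paper indicates (the paper only remarks that the recursion ``is obtained by distinguishing independent sets containing a vertex $v$ and those not containing it'' and omits the details): you partition the independent sets by membership of $v$, identify the two classes with the independent sets of $G-v$ and of $G-N[v]$ respectively, and account for the size shift with the factor $x$. Nothing is missing.
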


As an immediate consequence, we obtain recursions for the aforementioned invariants $I(G)$ and $T(G)$. 
\begin{prop}\label{sizeRec}
Let $v$ be a vertex of $G$ and $N[v]$ its closed neighbourhood. We have
\begin{align}
\I(G)&=\I(G-v)+\I(G-N[v]),\label{chap2.eq}\\
\Ti(G)&=\Ti(G-v)+\Ti(G-N[v])+ \I(G-N[v]).\label{chap2.eq2}
\end{align}
\end{prop}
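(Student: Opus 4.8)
The plan is to derive both identities directly from the polynomial recursion in Proposition~\ref{cha1.pro1} by specialising the variable $x$. Since $\I(G)=\I(G,1)$ and $\Ti(G)=\I'(G,1)$ by definition, the two equations should follow immediately from evaluating the polynomial identity $\I(G,x)=\I(G-v,x)+x\I(G-N[v],x)$ and its derivative at $x=1$.

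First I would establish \eqref{chap2.eq}. Setting $x=1$ in the polynomial identity gives $\I(G,1)=\I(G-v,1)+1\cdot\I(G-N[v],1)$, which upon unwinding the abbreviations is precisely $\I(G)=\I(G-v)+\I(G-N[v])$. Next, for \eqref{chap2.eq2}, I would differentiate the same identity with respect to $x$. The only point requiring attention is the product rule applied to the term $x\I(G-N[v],x)$, whose derivative is $\I(G-N[v],x)+x\I'(G-N[v],x)$. Evaluating the resulting expression at $x=1$ and substituting the definitions of $\I$ and $\Ti$ then yields $\Ti(G)=\Ti(G-v)+\I(G-N[v])+\Ti(G-N[v])$, as claimed. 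The extra summand $\I(G-N[v])$ is accounted for exactly by the derivative of the prefactor $x$.

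There is no genuine obstacle here: both recursions are immediate corollaries of Proposition~\ref{cha1.pro1}, obtained respectively by evaluation and by differentiation at $x=1$. The sole step demanding a little care is the correct application of the product rule in the second computation, since this is what distinguishes \eqref{chap2.eq2} from a naive term-by-term differentiation and produces the additional $\I(G-N[v])$ contribution.
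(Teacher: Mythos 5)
Your proof is correct and follows exactly the paper's own argument: equation \eqref{chap2.eq} by evaluating Proposition~\ref{cha1.pro1} at $x=1$, and equation \eqref{chap2.eq2} by differentiating first (with the product rule supplying the extra $\I(G-N[v])$ term) and then setting $x=1$. No issues.
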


\begin{proof}
The first equation \eqref{chap2.eq} is obtained from Proposition \ref{cha1.pro1} by putting $x=1$, the second by differentiating first and plugging in $x=1$ afterwards.
\end{proof}

Thus, we get the following identities for the average size of independent sets:
\begin{prop}\label{chap2.Eq:muRec}
Let $v$ be a vertex of $G$ and $N[v]$ its closed neighbourhood. We have
\begin{align*}
\avi(G)&=\frac{\Ti(G-v)+\Ti(G-N[v])+ \I(G-N[v])}{\I(G-v)+\I(G-N[v])}\\
&=\frac{\avi(G-v)\I(G-v)+(\avi(G-N[v])+1)\I(G-N[v])}{\I(G-v)+\I(G-N[v])}.
\end{align*}
\end{prop}

We conclude this section with the following lemma, which will be useful later.

\begin{lem}
If $G_1,G_2,\dots,G_k$ are the disjoint components of a graph $G$, then
\[\avi(G)=\sum_k\avi(G_k).\]
\end{lem}

\begin{proof}
It is well known that $$\I(G,x)=\prod_k\I(G_k,x),$$
see \cite{levit2005independence}. Now take the logarithm on both sides, differentiate and plug in $x=1$ to obtain the 
desired result.
\end{proof}

\section{Vertex or edge removal}\label{general}

Many graph invariants satisfy a monotonicity property with respect to vertex or edge removal. This means that removing a vertex (an edge) either always decreases of always increases the value of the invariant. The total number of independent sets is a simple example for such monotonicity properties: clearly, we have
$$I(G - v) < I(G) \text{ and } I(G - e) > I(G)$$
for every vertex $v$ and every edge $e$ of a graph $G$. As we will see in this section, the average size of independent sets is not a monotone invariant. However, we will show that there always exists a vertex in the graph whose removal decreases $\avi$. Then, we characterize extremal graphs among all $n$-vertex graphs.

Let us first show that the the average size of independent sets is not monotone with respect to vertex removal. If $v$ and $u$ are 
respectively a leaf and the centre of the $4$-vertex star $S_4$, then we have
$$\avi(S_4-v)=\avi(S_3)=1<\frac{13}{9}=\avi(S_4),$$
but
$$\avi(S_4-u)=\avi(E_3)=\frac{3}{2}>\frac{13}{9}=\avi(S_4).$$

The average size of independent sets is also not monotone with respect to edge removal: considering the tree in Figure \ref{treeT}, we have
$$\avi(T-e_1)=\frac{13}{9}+\frac{2}{3}=\frac{19}{9}<\frac{55}{26}=\avi(T),$$
but
$$\avi(T-e_2)=\frac{33}{17}+\frac{1}{2}=\frac{83}{34}>\frac{55}{26}=\avi(T).$$

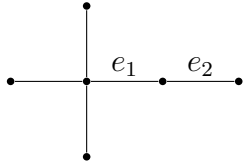
\begin{figure}[H]
\centering
\begin{tikzpicture}[scale=1]
\node[fill=black,circle,inner sep=1pt] (t1) at (0,0) {};
\node[fill=black,circle,inner sep=1pt] (t2) at (1,0) {};
\node[fill=black,circle,inner sep=1pt] (t3) at (2,0) {};
\node[fill=black,circle,inner sep=1pt] (t4) at (3,0) {};
\draw (t1)--(t2)--(t3)--(t4);
\node[fill=black,circle,inner sep=1pt] (t5) at (1,1) {};
\node[fill=black,circle,inner sep=1pt] (t6) at (1,-1) {};
\draw (t5)--(t2)--(t6);
\node at (1.5,0.2) {$e_1$};
\node at (2.5,0.2) {$e_2$};
\end{tikzpicture}
\caption{A tree $T$}\label{treeT}
\end{figure}

While the inequality $\avi(G-v)<\avi(G)$ does not always hold (as the example of the star shows), we can show that for every graph $G$, there exists a vertex $v$ with this property. To this end, we require the following theorem:
\begin{thm}
\label{chap2.Thm:Gen}
Let $X$ be a nonempty finite set, and $\mathcal{P}(X)$ its powerset. For a nonempty subset $\mathcal{A}$ of $\mathcal{P}(X)$, 
we define
$$
av(\mathcal{A})=\frac{1}{|\mathcal{A}|}\sum_{A\in \mathcal{A}}|A|.
$$
Let $\mathcal{B}$ be a subset of $\mathcal{P}(X)$, such that the cardinalities of the elements of $\mathcal{B}$ are not all the same.
Then there exists $x_0\in X$ such that $\mathcal{B}\cap \mathcal{P}(X-\{x_0\})$ is not empty and
$$
av(\mathcal{B})>av\big(\mathcal{B}\cap \mathcal{P}(X-\{x_0\})\big).
$$
\end{thm}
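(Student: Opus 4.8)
The plan is to rephrase the desired strict inequality as a statement about the \emph{average size of the sets containing a given element}, and then to establish the existence of a suitable $x_0$ by an averaging argument. Write $m=|\mathcal{B}|$ and $S=\sum_{A\in\mathcal{B}}|A|$, so that $\av(\mathcal{B})=S/m$. For each $x\in X$, let $d_x=|\{A\in\mathcal{B}:x\in A\}|$ count the sets containing $x$, and let $T_x=\sum_{A\in\mathcal{B},\,x\in A}|A|$ be their total size. The sets in $\mathcal{B}$ that avoid $x$ are exactly the members of $\mathcal{B}\cap\Pp(X\Ba\{x\})$; there are $m-d_x$ of them and their total size is $S-T_x$. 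Thus $\mathcal{B}\cap\Pp(X\Ba\{x\})$ is nonempty precisely when $d_x<m$, and in that case the desired inequality $\av(\mathcal{B})>\av(\mathcal{B}\cap\Pp(X\Ba\{x\}))$ reads $S/m>(S-T_x)/(m-d_x)$, which, after clearing the positive denominators, is equivalent to $mT_x>Sd_x$. Intuitively, this simply says that the sets containing $x$ are on average larger than a typical set of $\mathcal{B}$.

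It therefore suffices to find some $x_0\in X$ with $0<d_{x_0}<m$ and $mT_{x_0}>Sd_{x_0}$. I would do this by summing the quantity $mT_x-Sd_x$ over all $x\in X$. Interchanging the order of summation gives $\sum_{x\in X}T_x=\sum_{A\in\mathcal{B}}|A|^2$ and $\sum_{x\in X}d_x=\sum_{A\in\mathcal{B}}|A|=S$, so that
\[\sum_{x\in X}(mT_x-Sd_x)=m\sum_{A\in\mathcal{B}}|A|^2-S^2.\]
By the Cauchy--Schwarz inequality (equivalently, the power-mean inequality), $\sum_{A\in\mathcal{B}}|A|^2\ge S^2/m$, with equality if and only if all the cardinalities $|A|$ coincide. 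Since by hypothesis the elements of $\mathcal{B}$ do not all have the same size, this inequality is strict, and hence $\sum_{x\in X}(mT_x-Sd_x)>0$.

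Consequently at least one term $mT_{x_0}-Sd_{x_0}$ is strictly positive. The remaining point is to check that such an $x_0$ automatically lies in the admissible range: if $d_x=0$ then $T_x=0$ and the term vanishes, while if $d_x=m$ then every set of $\mathcal{B}$ contains $x$, so $T_x=S$ and the term vanishes again. Thus any $x_0$ with a strictly positive term satisfies $0<d_{x_0}<m$, which is exactly the condition that $\mathcal{B}\cap\Pp(X\Ba\{x_0\})$ is nonempty, and the inequality $mT_{x_0}>Sd_{x_0}$ then yields the claim. I expect the only genuinely delicate point to be the clean reformulation in terms of $mT_x>Sd_x$ together with the observation that the boundary cases $d_x\in\{0,m\}$ contribute zero to the sum; once these are in place, the Cauchy--Schwarz step and the use of the not-all-equal hypothesis are routine.
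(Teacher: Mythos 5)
Your proof is correct and follows essentially the same route as the paper's: both average over all $x \in X$ and reduce the claim to the strict Cauchy--Schwarz inequality $\bigl(\sum_{A\in\mathcal{B}}|A|\bigr)^2 < |\mathcal{B}|\sum_{A\in\mathcal{B}}|A|^2$, which is strict precisely because the cardinalities are not all equal. The only difference is bookkeeping: you track the sets containing $x$ and clear denominators to get $mT_x>Sd_x$ (checking that the boundary cases $d_x\in\{0,m\}$ contribute zero), whereas the paper tracks the sets avoiding $x$ and compares $av(\mathcal{B})$ with the ratio $\sum_{x}S(\mathcal{B}\cap\mathcal{P}(X-\{x\}))\big/\sum_{x}|\mathcal{B}\cap\mathcal{P}(X-\{x\})|$; the two formulations are equivalent.
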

\begin{proof}
It is convenient to use the abbreviations
$$
n_k(\mathcal{A})=\big|\{A\in \mathcal{A}:|A|=k\}\big| 
\text{ and }S(\mathcal{A})=\sum_{A\in \mathcal{A}}|A|=\sum_{k\geq 0}k\cdot n_k(\mathcal{A}).
$$
We will prove that 
\begin{equation}\label{eq:averaged_ineq}
av(\mathcal{B})>\frac{\sum_{x\in X}S(\mathcal{B}\cap\mathcal{P}(X-\{x\}))}{\sum_{x\in X}|\mathcal{B}\cap\mathcal{P}(X-\{x\})|}.
\end{equation}
Note here that the denominator is not $0$: if $\mathcal{B}\cap\mathcal{P}(X-\{x\})$ was empty for all $x$, then $\mathcal{B}$ could only contain the set $X$ and nothing else, contradicting our assumption that the cardinalities of the elements of $\mathcal{B}$ are not all the same.
The claim of the theorem follows immediately, since there must be an $x_0 \in X$ such that
$$
\frac{\sum_{x\in X}S(\mathcal{B}\cap\mathcal{P}(X-\{x\}))}{\sum_{x\in X}|\mathcal{B}\cap\mathcal{P}(X-\{x\})|}
\geq \frac{S(\mathcal{B}\cap\mathcal{P}(X-\{x_0\}))}{|\mathcal{B}\cap\mathcal{P}(X-\{x_0\})|}.
$$
Now let us prove~\eqref{eq:averaged_ineq}. In the sum
$$
\sum_{x\in X}S(\mathcal{B}\cap\mathcal{P}(X-\{x\}))=
\sum_{x\in X}\sum_{k\geq 0}k\cdot n_k(\mathcal{B}\cap\mathcal{P}(X-\{x\})),
$$
the size of each $B\in\mathcal{B}$ contributes $|X|-|B|$ times. Hence 
\begin{align*}
\sum_{x\in X}S(\mathcal{B}\cap\mathcal{P}(X-\{x\}))
&=\sum_{k\geq 0}(|X|-k)k\cdot n_k(\mathcal{B})=|X|S(\mathcal{B})-\sum_{k\geq 0}k^2n_k(\mathcal{B})\\&
=|X|av(\mathcal{B})|\mathcal{B}|-\sum_{k\geq 0}k^2n_k(\mathcal{B}).
\end{align*}
Similarly,
\begin{align*}
\sum_{x\in X}|\mathcal{B}\cap\mathcal{P}(X-\{x\})|
&=\sum_{x\in X}\sum_{k\geq 0}n_k(\mathcal{B}\cap\mathcal{P}(X-\{x\}))\\
&=\sum_{x\in X}\sum_{k\geq 0}(|X|-k)n_k(\mathcal{B})\\
&=|X||\mathcal{B}|-S(\mathcal{B}).
\end{align*}

By the Cauchy-Schwarz inequality, we have
\begin{align*}
S(\mathcal{B})^2=\Big(\sum_{k\geq 0}k\cdot n_k(\mathcal{B})\Big)^2
\leq \Big( \sum_{k\geq 0}n_k(\mathcal{B}) \Big) \Big( \sum_{k\geq 0}k^2n_k(\mathcal{B}) \Big) =|\mathcal{B}|\sum_{k\geq 0}k^2n_k(\mathcal{B}), 
\end{align*}
and equality holds if and only if there is only one $k$ such that $n_k(\mathcal{B}) \neq 0$, which means all the elements of $\mathcal{B}$ have the same size.
Since this is ruled out by our assumptions, we actually have
$$
av(\mathcal{B})S(\mathcal{B})< \sum_{k\geq 0}k^2n_k(\mathcal{B}).
$$
Therefore we get
\begin{align*}
\frac{\sum_{x\in X}S(\mathcal{B}\cap\mathcal{P}(X-\{x\}))}{\sum_{x\in X}|\mathcal{B}\cap\mathcal{P}(X-\{x\})|}
&=\frac{|X|av(\mathcal{B})|\mathcal{B}|-\sum_{k\geq 0}k^2n_k(\mathcal{B})}{|X||\mathcal{B}|-S(\mathcal{B})}\\
&< \frac{|X|av(\mathcal{B})|\mathcal{B}|-av(\mathcal{B})S(\mathcal{B})}{|X||\mathcal{B}|-S(\mathcal{B})}=av(\mathcal{B}),
\end{align*}
which concludes the proof of~\eqref{eq:averaged_ineq} and thus the theorem.
\end{proof}
\begin{cor}
If $G$ is a nonempty graph, then there exists a vertex $v$ in $G$ such that 
$$
\avi(G-v)<\avi(G).
$$
\end{cor}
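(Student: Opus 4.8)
The plan is to apply Theorem~\ref{chap2.Thm:Gen} directly, taking $X$ to be the vertex set $V(G)$ and $\mathcal{B}$ to be the collection of all independent sets of $G$. Since every independent set is a subset of $V(G)$, we have $\mathcal{B} \subseteq \mathcal{P}(X)$, and comparing the definition of $\avi(G)$ with that of $\av$ shows immediately that $\av(\mathcal{B}) = \avi(G)$. Thus the corollary will follow once the conclusion of the theorem is translated back into the language of independent sets.

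The key observation linking the two statements is that for any vertex $x_0$, the independent sets of $G$ that avoid $x_0$ are precisely the independent sets of the induced subgraph $G - x_0$: an independent set not containing $x_0$ uses only vertices and edges present in $G - x_0$, and conversely. In other words, $\mathcal{B} \cap \mathcal{P}(X - \{x_0\})$ is exactly the family of independent sets of $G - x_0$, so that $\av\big(\mathcal{B} \cap \mathcal{P}(X - \{x_0\})\big) = \avi(G - x_0)$.

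Before invoking the theorem I would check its hypotheses. Since $G$ is nonempty, $X = V(G)$ is a nonempty finite set. To see that the cardinalities of the elements of $\mathcal{B}$ are not all equal, note that the empty set is always independent (size $0$), while any single vertex forms an independent set of size $1$; as $G$ has at least one vertex, these two sizes differ. Hence Theorem~\ref{chap2.Thm:Gen} applies and produces a vertex $v = x_0$ with $\mathcal{B} \cap \mathcal{P}(X - \{v\})$ nonempty and $\av(\mathcal{B}) > \av\big(\mathcal{B} \cap \mathcal{P}(X - \{v\})\big)$. Feeding in the two identities above turns this into $\avi(G) > \avi(G - v)$, which is the claim.

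Since this is a straightforward specialization, there is essentially no hard step: the only points requiring care are verifying the hypothesis that not all elements of $\mathcal{B}$ share the same cardinality and correctly identifying $\mathcal{B} \cap \mathcal{P}(X - \{x_0\})$ with the independent sets of $G - x_0$. All of the genuine work is already contained in Theorem~\ref{chap2.Thm:Gen}, which we may assume.
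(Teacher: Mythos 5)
Your proof is correct and follows exactly the paper's route: the paper's own proof is the one-line "Apply Theorem~\ref{chap2.Thm:Gen} with $\mathcal{B}$ the set of independent vertex subsets of $G$," and you simply spell out the hypothesis check (the empty set and a singleton give independent sets of different sizes) and the identification of $\mathcal{B}\cap\mathcal{P}(X-\{x_0\})$ with the independent sets of $G-x_0$, both of which the paper leaves implicit.
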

\begin{proof}
Apply Theorem \ref{chap2.Thm:Gen}, with $\mathcal{B}$ being the set of independent vertex subsets of $G$.
\end{proof}

We have seen that there is always a vertex in a graph whose removal decreases the average size of independent sets $\avi$. However, the dual statement for edge removal does not hold, namely there is not always an edge whose removal increases $\avi$ (nor is there always an edge whose removal decreases $\avi$). As counterexamples, we can consider the stars $S_6$ and $S_4$: for any edge $e$ in $S_6$ ($S_4$, respectively) we have

\begin{align*}
\avi(S_6)&=\frac{27}{11}>\frac{83}{34}=\avi(S_6-e),\\
\avi(S_4)&=\frac{13}{9}<\frac{3}{2}=\avi(S_4-e).
\end{align*}
So every edge removal in $S_6$ decreases $\avi$, while every edge removal in $S_4$ increases $\avi$.

Despite this, the edgeless graphs and the complete graphs are extremal:
\begin{thm}\label{chap2.thmemp}
For every $n$-vertex graph $G$ that is not the edgeless graph $E_n$ or the complete graph $K_n$, 
$\frac{n}{n+1}=\avi (K_n) < \avi(G)<\avi(E_n)=\frac{n}{2}$. 
\end{thm}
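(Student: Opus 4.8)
The plan is to reduce each of the two inequalities to a statement that a certain signed sum over independent sets has the right sign, and to read off strictness from whether $G$ equals $E_n$ or $K_n$. The two endpoint values are routine: for $K_n$ the only independent sets are $\emptyset$ and the $n$ singletons, so $\I(K_n)=n+1$, $\Ti(K_n)=n$ and $\avi(K_n)=n/(n+1)$, while $\avi(E_n)=n/2$ was already computed in the example. Since $\emptyset$ is always independent, $\I(G)>0$ and the average is well defined throughout.

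For the upper bound I would work with the quantity $\I(G)\bigl(n-2\avi(G)\bigr)=n\I(G)-2\Ti(G)$ and rewrite it by double counting over vertices. Since $\sum_{v}\I(G-v)=\sum_{A}(n-|A|)=n\I(G)-\Ti(G)$ counts, for each independent set $A$, the vertices outside $A$, and $\sum_{v}\I(G-N[v])=\sum_{A}|A|=\Ti(G)$ counts the vertices inside $A$, we obtain
\[
n\I(G)-2\Ti(G)=\sum_{v\in V(G)}\bigl(\I(G-v)-\I(G-N[v])\bigr).
\]
Here $\I(G-N[v])$ is the number of independent sets containing $v$ and $\I(G-v)$ the number not containing $v$, and the map $A\mapsto A\setminus\{v\}$ is an injection from the former into the latter. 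Hence every summand is nonnegative, so $\avi(G)\le n/2$. This injection is surjective precisely when $B\cup\{v\}$ is independent for every independent $B$ avoiding $v$, i.e.\ exactly when $v$ has no neighbour; so the summand for $v$ is strictly positive as soon as $v$ is incident with an edge. Thus the whole sum is strictly positive unless every vertex is isolated, giving $\avi(G)<n/2$ for all $G\neq E_n$.

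For the lower bound I would instead analyze
\[
(n+1)\Ti(G)-n\I(G)=\sum_{A}\bigl((n+1)|A|-n\bigr),
\]
and split the sum according to the size of $A$. The empty set contributes $-n$, each of the $n$ singletons contributes $+1$ for a total of $+n$, so these cancel, while every independent set of size at least $2$ contributes at least $(n+1)\cdot 2-n=n+2>0$. Consequently the sum is nonnegative and is strictly positive precisely when $G$ has an independent set of size $\ge 2$, that is, when $G$ has two nonadjacent vertices, i.e.\ $G\neq K_n$. This yields $\avi(G)>n/(n+1)$ for all $G\neq K_n$. Combining the two parts, any $G$ that is neither $E_n$ nor $K_n$ satisfies both strict inequalities simultaneously.

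The routine ingredients are the two double-counting identities; the only place demanding real care is the equality analysis for the upper bound, where one must argue that $A\mapsto A\setminus\{v\}$ is a bijection if and only if $v$ is isolated, so that strictness kicks in as soon as a single edge appears. The lower bound needs no such argument, since the size-based splitting already isolates the positive contribution of independent sets of size $\ge 2$.
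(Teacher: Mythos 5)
Your proof is correct, and for the upper bound it takes a genuinely different route from the paper. The paper proves $\avi(G)<n/2$ by induction on $n$, feeding the induction hypothesis into the recursion $\avi(G)=\frac{\Ti(G-v)+\Ti(G-N[v])+\I(G-N[v])}{\I(G-v)+\I(G-N[v])}$ for a vertex $v$ of positive degree; you instead establish the exact identity $n\I(G)-2\Ti(G)=\sum_{v}\bigl(\I(G-v)-\I(G-N[v])\bigr)$ by double counting and observe that each summand is nonnegative, with strictness exactly at the non-isolated vertices. Your two counting identities are correct ($\I(G-v)$ counts independent sets avoiding $v$, $\I(G-N[v])$ those containing $v$), your injection $A\mapsto A\setminus\{v\}$ and its equality analysis are sound, and the conclusion that the sum is strictly positive whenever $G$ has an edge is exactly what is needed. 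This argument is really a specialisation of the averaging computation the paper uses to prove its Theorem~\ref{chap2.Thm:Gen} (where $\sum_{x}|\mathcal{B}\cap\mathcal{P}(X-\{x\})|=|X||\mathcal{B}|-S(\mathcal{B})$ appears), so it fits the paper's toolbox while avoiding induction altogether and yielding a sharper, local description of where the slack $n/2-\avi(G)$ comes from; the paper's induction is shorter to write given that the recursion is already in place, but gives no such structural information. Your lower bound is essentially the paper's own argument (every graph contains the empty set and all $n$ singletons, and any $G\neq K_n$ has additional independent sets of size at least $2$), just formalised cleanly via the signed sum $\sum_{A}\bigl((n+1)|A|-n\bigr)$.
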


\begin{proof}
The first inequality is straightforward from the fact that the only independent sets of $K_n$ are $n$ independent sets of size 1 and the empty set: all other graphs with $n$ vertices have these independent sets and some larger ones. We prove the second inequality by induction. For $n=1$, there is no possible graph different from $E_n$, so there is nothing to prove. Now, assume that the inequality is true for all $n \leq k$, for some $k \geq 1$. Let $G$ be a 
$(k+1)$-vertex graph that is not edgeless. Let $v \in V(G)$ be a vertex such that 
$\deg(v) \geq 1$. We have
$$
\avi(G)=\frac{\Ti(G-v)+\Ti(G-N[v])+\I(G-N[v])}{\I(G-v)+\I(G-N[v])}.
$$
Using the induction hypothesis, we obtain
\begin{align*}
\avi(G)& \leq \frac{\frac{k}{2}\I(G-v)+(\frac{k-1}{2}+1)(\I(G-N[v]))}{\I(G-v)+\I(G-N[v])}\\
&= \frac{k}{2} + \frac{\frac{1}{2}\I(G-N[v])}{\I(G-v)+\I(G-N[v])}\\
&< \frac{k}{2} + \frac{1}{2}=\frac{k+1}{2}=\avi(E_{k+1}).
\end{align*}	 
\end{proof}

\section{Trees}\label{trees}

In this section, we are concerned with extremal trees regarding the average size of independent sets.
Let us first consider the problem of maximizing the average size of independent sets among all $n$-vertex trees.
\begin{thm}
For every $n$-vertex tree $T$, $\avi(S_n) \geq \avi(T)$. 
\end{thm}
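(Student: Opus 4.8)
The plan is to induct on $n$. For $n\le 3$ the only tree on $n$ vertices is $S_n$ itself, so there is nothing to prove; I assume $n\ge 4$ and that the statement holds for all trees with fewer vertices. Fix a leaf $v$ of $T$ with neighbour $u$, so that $N[v]=\{u,v\}$. By Proposition~\ref{chap2.Eq:muRec}, $\avi(T)$ is a weighted average,
$$\avi(T)=(1-\lambda)\,\avi(T-v)+\lambda\bigl(\avi(T-N[v])+1\bigr),\qquad \lambda=\frac{\I(T-N[v])}{\I(T-v)+\I(T-N[v])}.$$
Here $T-v$ is a tree on $n-1$ vertices, so $\avi(T-v)\le\avi(S_{n-1})$ by the induction hypothesis, while $T-N[v]$ is a forest on $n-2$ vertices, so $\avi(T-N[v])\le\tfrac{n-2}{2}$ by Theorem~\ref{chap2.thmemp}. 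Thus the two values being averaged are bounded by $a:=\avi(S_{n-1})$ and $b:=\tfrac{n}{2}$ respectively, and one checks $a\le b$ for $n\ge 4$.

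The key observation is that these are exactly the values appearing when $S_n$ is expanded in the same way. Expanding $S_n$ at one of its leaves gives $S_n-v=S_{n-1}$ and $S_n-N[v]=E_{n-2}$, so that
$$\avi(S_n)=(1-\lambda^*)\,\avi(S_{n-1})+\lambda^*\cdot\tfrac{n}{2},\qquad \lambda^*=\frac{\I(E_{n-2})}{\I(S_{n-1})+\I(E_{n-2})}=\frac{2^{\,n-2}}{2^{\,n-1}+1},$$
using $\I(S_{n-1})=2^{\,n-2}+1$ and $\I(E_{n-2})=2^{\,n-2}$. Since $a+\lambda(b-a)$ is nondecreasing in $\lambda$, it suffices to prove the single weight inequality $\lambda\le\lambda^*$: this then gives $\avi(T)\le(1-\lambda)a+\lambda b\le(1-\lambda^*)a+\lambda^* b=\avi(S_n)$.

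It remains to establish $\lambda\le\lambda^*$, which after rearranging is equivalent to $\dfrac{\I(T-v)}{\I(T-N[v])}\ge 1+2^{-(n-2)}$. Applying the recursion~\eqref{chap2.eq} to the graph $T-v$ at its vertex $u$, and noting that $(T-v)-u=T-N[v]$, yields $\I(T-v)=\I(T-N[v])+\I(W)$, where $W$ denotes $T-v$ with $u$ and its neighbours removed. Hence $\dfrac{\I(T-v)}{\I(T-N[v])}=1+\dfrac{\I(W)}{\I(T-N[v])}$, and the claim follows from the crude estimates $\I(W)\ge 1$ (the empty set is always independent) and $\I(T-N[v])\le 2^{\,n-2}$ (a graph on $n-2$ vertices has at most $2^{\,n-2}$ subsets), which together give $\dfrac{\I(W)}{\I(T-N[v])}\ge 2^{-(n-2)}$.

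The main obstacle is conceptual rather than computational: the difficulty is to recognise that both $\avi(T)$ and $\avi(S_n)$ are weighted averages of the \emph{same} pair of extreme values $\avi(S_{n-1})$ and $\tfrac{n}{2}$, so that the whole comparison collapses to the single inequality $\lambda\le\lambda^*$. Once this is seen, that inequality reduces, via one use of~\eqref{chap2.eq}, to the essentially trivial cardinality bounds above. In writing this up I would take care to confirm the monotonicity direction (that $b\ge a$, so a smaller weight on the larger value is indeed what we need) and to verify the base cases $n\le 3$.
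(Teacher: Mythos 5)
Your proof is correct and follows essentially the same route as the paper: induction via leaf removal, writing $\avi(T)$ as a weighted average of $\avi(T-v)$ and $\avi(T-N[v])+1$, bounding these by $\avi(S_{n-1})$ and $\tfrac{n}{2}$ using the induction hypothesis and Theorem~\ref{chap2.thmemp}, and reducing the comparison with $\avi(S_n)$ to a single monotonicity-in-the-weight inequality. The only (harmless) difference is in how that weight inequality is justified: the paper bounds $\I(T-v)\le \I(S_{n-1})=2^{n-2}+1$ via the extremality of the star for the number of independent sets, whereas you use the trivial bound $\I(T-N[v])\le 2^{n-2}$ together with $\I(T-v)=\I(T-N[v])+\I(W)\ge \I(T-N[v])+1$, which is marginally more self-contained.
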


\begin{proof}
In the cases where $n=1,2,3$, we must have $T=S_n$, and thus the claim holds. 

Assume the inequality holds for all $n \leq k$, for some $k \geq 3$. Now suppose that $T\neq S_n$ is a tree with $n=k+1$ vertices. Let $v \in V(T)$ be a leaf of $T$ and $u$ its neighbour. Then 
$T-v$ is still a tree and by Proposition \ref{sizeRec}
$$
\I(T-v)-\I(T-v-u)=\I(T-v-N[u])> 1.
$$ Moreover, the star minimises the number of independent sets among all $n$-vertex trees (see \cite{prodinger1982fibonacci}), i.e.~ $\I(T-v)\leq \I(S_{n-1}) = 2^{n-2} + 1$. Thus
\begin{equation}
\frac{\I(T-N[v])}{\I(T-v)}=1-\frac{\I(T-v)-\I(T-N[v])}{\I(T-v)}
<1-\frac{1}{\I(T-v)}\leq 1-\frac{1}{2^{n-2}+1}.\label{avisn}
\end{equation}
Using the recursion of Proposition \ref{chap2.Eq:muRec} and Theorem \ref{chap2.thmemp}, we obtain
\begin{align*}
\avi(T)&=\frac{\avi(T-v)\I(T-v)+(\avi(T-N[v])+1)\I(T-N[v])}{\I(T-v)+\I(T-N[v])}\\
&\leq \frac{\avi(S_{n-1})\I(T-v)/\I(T-N[v])+\frac{n-2}{2}+1}{\I(T-v)/\I(T-N[v])+1}.
\end{align*}

Since $\avi(S_{n-1}) <\frac{n-2}{2}+1$, $\frac{\avi(S_{n-1})x+\frac{n-2}{2}+1}{x+1}$ is decreasing as a function of $x$ for $x \geq 0$. Combined with the inequality in \eqref{avisn}, this shows that
\begin{align*}
\avi(T)< \frac{\avi(S_{n-1})(2^{n-2}+1)/2^{n-2}+\frac{n-2}{2}+1}{(2^{n-2}+1)/2^{n-2}+1}=\avi(S_n).
\end{align*}
\end{proof}
It is not unexpected that the star maximises $\avi$, since it is also the tree with the greatest number of independent sets (a well-known fact first established in \cite{prodinger1982fibonacci}). The path, on the other hand, has the smallest number of independent sets among all trees of a given size. One would therefore expect that the average size of independent sets also attains its minimum for the path, which is indeed the case. Proving this fact requires more effort. Let us first find an explicit formula for the average size of independent sets of a path.
\begin{lem}\label{chap2.lem:path}
The average size of independent sets of the $n$-vertex path $P_n$ is
\begin{equation}\label{chap2.eq:tP_n}
\avi(P_n) = \frac{5 - \sqrt{5}}{10} n + \frac{3-\sqrt{5}}{5} - \frac{n+2}{\sqrt{5}((-\phi^2)^{n+2}-1)},
\end{equation}
where $\phi = \frac{\sqrt{5}+1}{2}$ is the golden ratio. In particular,
\begin{enumerate}
\item[(a)] $ \lim_{n \to \infty} \avi(P_n) - \frac{5 - \sqrt{5}}{10} n = \frac{3-\sqrt{5}}{5},$
\item[(b)] $ \avi(P_n) \geq \frac{5-\sqrt{5}}{10} n + \frac{1}{\sqrt{5}} - \frac13$, with equality only for $n = 2$. For all positive integers $n \neq 2$, we even have $ \avi(P_n) \geq \frac{5-\sqrt{5}}{10} n + \frac{2}{\sqrt{5}} - \frac34$, with equality only for $n=4$.
\end{enumerate}
\end{lem}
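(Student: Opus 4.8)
The plan is to compute the two quantities $\I(P_n)$ and $\Ti(P_n)$ explicitly and then form their quotient $\avi(P_n)$. Applying Proposition~\ref{sizeRec} to an endpoint $v$ of $P_n$ (so that $P_n-v=P_{n-1}$ and $P_n-N[v]=P_{n-2}$) yields the recursions $\I(P_n)=\I(P_{n-1})+\I(P_{n-2})$ and $\Ti(P_n)=\Ti(P_{n-1})+\Ti(P_{n-2})+\I(P_{n-2})$, with $\I(P_0)=1$, $\I(P_1)=2$, $\Ti(P_0)=0$, $\Ti(P_1)=1$. The first recursion identifies $\I(P_n)$ with the Fibonacci number $F_{n+2}$. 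For $\Ti(P_n)$ I would pass to generating functions: writing $I(z)=\sum_n\I(P_n)z^n$ and $T(z)=\sum_n\Ti(P_n)z^n$, the recursions give $I(z)=\frac{1+z}{1-z-z^2}$ and, after a short manipulation, the remarkably clean $T(z)=\frac{z}{(1-z-z^2)^2}$.

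Alternatively, and perhaps more transparently, one can count $\Ti(P_n)$ as $\sum_v$(number of independent sets containing $v$). Removing the closed neighbourhood of the $j$-th vertex splits $P_n$ into $P_{j-2}$ and $P_{n-j-1}$, so this count equals $\sum_{j=1}^n F_jF_{n+1-j}$, a Fibonacci convolution with the classical closed form $\frac{(n+1)L_{n+1}-F_{n+1}}{5}$, where $L_m$ denotes the Lucas numbers. Either way, I would then factor $1-z-z^2=(1-\phi z)(1-\psi z)$ with $\psi=-1/\phi$, extract coefficients by partial fractions (or substitute Binet's formulas $F_m=\frac{\phi^m-\psi^m}{\sqrt5}$ and $L_m=\phi^m+\psi^m$), and divide by $\I(P_n)=F_{n+2}$. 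Using $\phi/\psi=-\phi^2$, the residual term rewrites as $\frac{(n+2)\psi^{n+2}}{5F_{n+2}}=\frac{n+2}{\sqrt5((-\phi^2)^{n+2}-1)}$, which produces~\eqref{chap2.eq:tP_n}. The main obstacle is exactly this algebraic simplification: organising the $\phi,\psi,\sqrt5$ bookkeeping so that the linear coefficient collapses to $\frac{5-\sqrt5}{10}$ and the constant to $\frac{3-\sqrt5}{5}$.

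Part (a) is then immediate: since $|\psi|<1$, we have $(-\phi^2)^{n+2}=(\phi/\psi)^{n+2}\to\pm\infty$, so the last term of~\eqref{chap2.eq:tP_n} tends to $0$ and $\avi(P_n)-\frac{5-\sqrt5}{10}n\to\frac{3-\sqrt5}{5}$. For part (b) I would write $\avi(P_n)-\frac{5-\sqrt5}{10}n=\frac{3-\sqrt5}{5}-R(n)$ with $R(n)=\frac{n+2}{\sqrt5((-\phi^2)^{n+2}-1)}$, and split according to the parity of $n$. For odd $n$ the quantity $(-\phi^2)^{n+2}$ is negative, so $R(n)<0$ and the expression strictly exceeds $\frac{3-\sqrt5}{5}$, which is larger than both claimed bounds. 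For even $n$ we have $R(n)>0$, and an elementary estimate shows $R(n)$ is strictly decreasing along the even integers (the numerator grows linearly while the denominator grows like $\phi^{2n}$); concretely one checks $R(n+2)<R(n)$ by clearing denominators and using $\phi^4>1$.

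Consequently the minimum of $\avi(P_n)-\frac{5-\sqrt5}{10}n$ is attained at the smallest admissible even value, which is $n=2$ for the first inequality and $n=4$ for the second (where $n=2$ is excluded). The target constants $\frac1{\sqrt5}-\frac13$ and $\frac2{\sqrt5}-\frac34$, together with the equality cases, then follow by evaluating~\eqref{chap2.eq:tP_n} directly, i.e.\ from $\avi(P_2)=\frac23$ and $\avi(P_4)=\frac54$; the strictness for all other $n$ comes from the parity split together with the monotonicity of $R$. The single remaining case $n=1$ for the second inequality (where $\avi(P_1)=\frac12$) is checked by hand.
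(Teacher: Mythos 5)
Your proposal is correct and follows essentially the same route as the paper: obtain the closed form for $\Ti(P_n)$ from the recursion $\Ti(P_n)=\Ti(P_{n-1})+\Ti(P_{n-2})+\I(P_{n-2})$ (the paper simply states the solution of this linear recurrence, while you reach the same formula via generating functions or the Fibonacci convolution $\sum_j F_jF_{n+1-j}$), divide by $\I(P_n)=F_{n+2}$, and then control the error term $\frac{n+2}{\sqrt{5}((-\phi^2)^{n+2}-1)}$. Your parity split together with the monotonicity of $R(n)$ along even $n$ is just a reorganisation of the paper's observation that this alternating error term decreases in absolute value, so the minima land at $n=2$ and $n=4$ exactly as in the paper.
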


\begin{proof}
It is well known that the number of independent sets of $P_n$ is the Fibonacci number $F_{n+2} = \frac{1}{\sqrt{5}} \big( \phi^{n+2} - (-\phi)^{-n-2} \big)$ (see \cite{prodinger1982fibonacci}). The total number of vertices $\Ti(P_n)$ in all independent sets of $P_n$ is determined by the recursion
$$\Ti(P_n) = \Ti(P_{n-1}) + \Ti(P_{n-2}) + \I(P_{n-2})$$ that follows from Proposition \ref{sizeRec},
and the initial values $\Ti(P_1) = 1$ and $\Ti(P_2) = 2$. The solution to this recursion is easily found to be
$$\Ti(P_n) = \Big( \frac{1+\sqrt{5}}{10} n + \frac{2\sqrt{5}}{25} \Big) \phi^n + \Big( \frac{1-\sqrt{5}}{10} n - \frac{2\sqrt{5}}{25} \Big) (-\phi)^{-n}.$$

The formula~\eqref{chap2.eq:tP_n} for the quotient $\avi(P_n) = \Ti(P_n)/\I(P_n)$ follows immediately, as does the limit in (a).

Now we show that the absolute value of the error term is decreasing: for $n \geq 2$, we have
\begin{align*}
\Bigg| \frac{\frac{n+2}{\sqrt{5}((-\phi^2)^{n+2}-1)}}{\frac{n+1}{\sqrt{5}((-\phi^2)^{n+1}-1)}} \Bigg| &\leq \Big( 1 + \frac{1}{n+1} \Big) \cdot \frac{\phi^{2(n+1)}+1}{\phi^{2(n+2)}-1}\\& = \phi^{-2} \Big( 1 + \frac{1}{n+1} \Big) \cdot \frac{\phi^{-2(n+1)}+1}{1-\phi^{-2(n+2)}} \\
&\leq \phi^{-2} \cdot \frac{4}{3} \cdot \frac{\phi^{-6}+1}{1-\phi^{-8}} = \frac{4(\sqrt{5}-1)}{9} < 1.
\end{align*}
Therefore, the difference
$$\Big| \avi(P_n) - \frac{5-\sqrt{5}}{10} n - \frac{3-\sqrt{5}}{5}\Big|$$
is decreasing in $n$.
Moreover, note that the sign of $\frac{n+2}{\sqrt{5}((-\phi^2)^{n+2}-1)}$ alternates, so that $\avi(P_n)$ is alternatingly greater and less than $\frac{5  - \sqrt{5}}{10} n + \frac{3-\sqrt{5}}{5}$. It follows that the minimum of the difference $\avi(P_n) - \frac{5-\sqrt{5}}{10} n$ is attained for $n=2$. Among all $n \neq 2$, the minimum occurs when $n=4$. The values of $\avi(P_n)$ are easily calculated in both cases, and the two inequalities in (b) follow.
\end{proof}

For ease of notation, we set $a= \frac{5-\sqrt{5}}{10} \approx 0.27639320$ and $c_n = \avi(P_n) - an$. The following table gives values of $c_n$ for small $n$:

\begin{table}[htbp]
\centering
\begin{tabular}{|c|c|c|c|c|c|}
\hline
$n$ & 1 & 2 & 3 & 4& 5 \\
\hline
$c_n$ & $\frac{1}{2\sqrt{5}} \approx 0.2236$ & $\frac{1}{\sqrt{5}} - \frac13 \approx 0.1139$ & $\frac{3}{2\sqrt{5}} - \frac12 \approx 0.1708$& $\frac{2}{\sqrt{5}} - \frac34 \approx 0.1444$ & $\frac{\sqrt{5}}{2} - \frac{25}{26} \approx 0.1565$  \\
\hline
\end{tabular}
\vspace*{0.3cm}
\caption{Values of $c_1,c_2,\ldots,c_5$ for independent sets.}
\end{table}

Before we prove our main result, we require one more lemma:

\begin{lem}\label{chap2.lem:quot}
For every tree $T$ and every vertex $v$ of $T$, we have
$$\frac12\leq \frac{\I(T-v)}{\I(T)} < 1.$$
\end{lem}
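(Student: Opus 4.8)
The plan is to prove both inequalities $\frac12 \leq \frac{\I(T-v)}{\I(T)} < 1$ using the basic recursion from Proposition~\ref{cha1.pro1} evaluated at $x=1$, namely $\I(T) = \I(T-v) + \I(T-N[v])$. The upper bound is immediate: since $\I(T-N[v])$ counts independent sets (including the empty set) of a nonempty graph, we have $\I(T-N[v]) \geq 1 > 0$, so $\I(T-v) = \I(T) - \I(T-N[v]) < \I(T)$, giving the strict inequality $\frac{\I(T-v)}{\I(T)} < 1$.

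For the lower bound $\frac{\I(T-v)}{\I(T)} \geq \frac12$, by the recursion this is equivalent to showing $\I(T-v) \geq \I(T-N[v])$. The natural approach is an injection: every independent set of $T - N[v]$ is in particular an independent set of $T - v$ (since $T - N[v]$ is an induced subgraph of $T-v$, obtained by further deleting the neighbours of $v$). More precisely, since $N[v] \supseteq \{v\}$, we have $V(T - N[v]) \subseteq V(T-v)$, and any independent set of the smaller induced subgraph remains independent in the larger one. This gives $\I(T-N[v]) \leq \I(T-v)$ directly, hence $\frac{\I(T-v)}{\I(T)} = \frac{\I(T-v)}{\I(T-v) + \I(T-N[v])} \geq \frac12$. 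I would verify the base/degenerate cases: if $v$ is isolated then $N[v] = \{v\}$ and $T-N[v] = T-v$, giving exactly $\frac12$; otherwise $v$ has at least one neighbour, so the inclusion $V(T-N[v]) \subsetneq V(T-v)$ is strict.

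\emph{The main obstacle}, if any, is purely notational: one must be careful that the inequality $\I(T-N[v]) \leq \I(T-v)$ is justified by the induced-subgraph containment rather than by any tree-specific property. In fact nothing here uses that $T$ is a tree; the statement holds for any graph. I would therefore phrase the injection argument at the level of general graphs and remark that the hypothesis ``$T$ a tree'' is not needed for this particular lemma (it is presumably included because the lemma is applied to trees in the sequel). The only subtle point is confirming the denominators are positive and that the empty set always contributes, so that $\I$ is never zero for any of the graphs involved.

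To summarize the order of steps: first record the recursion $\I(T) = \I(T-v) + \I(T-N[v])$; second, establish $\I(T-N[v]) \geq 1$ to get the strict upper bound; third, establish $\I(T-N[v]) \leq \I(T-v)$ via the induced-subgraph inclusion to get the lower bound; and finally observe that equality in the lower bound occurs precisely when $v$ is an isolated vertex. The entire argument is short and self-contained, relying only on Proposition~\ref{cha1.pro1} and the monotonicity of $\I$ under passing to induced subgraphs.
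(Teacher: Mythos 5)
Your proof is correct and follows essentially the same route as the paper: both use the recursion $\I(T)=\I(T-v)+\I(T-N[v])$ together with the induced-subgraph monotonicity $\I(T-N[v])\leq\I(T-v)$ for the lower bound, and the strict monotonicity of $\I$ under vertex deletion for the upper bound. Your added remarks (that the tree hypothesis is unnecessary and that equality at $\frac12$ occurs exactly when $v$ is isolated) are accurate but not needed.
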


\begin{proof}
Note first that $\I(T) = \I(T-v) + \I(T-N[v])$. Since $T-N[v]$ is a subgraph of $T-v$, we have $\I(T-N[v]) \leq \I(T-v)$, hence $2\I(T-v) \geq \I(T)$, which proves the first inequality. The second inequality simply follows from the fact that $T-v$ is an induced proper subgraph of~$T$.
\end{proof}

\begin{thm}\label{thm:path}
For every tree $T$ of order $n$ that is not a path, we have the inequality $\avi(T) \geq an + b$, where $b = (79\sqrt{5}-165)/70 \approx 0.16641957$. Consequently, the path minimises the value of $\avi(T)$ among all trees of order $n$.
\end{thm}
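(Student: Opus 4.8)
The plan is to argue by strong induction on $n$, using the recursion of Proposition~\ref{chap2.Eq:muRec} to peel a leaf off a longest pendant path. Since every tree on at most three vertices is a path, the assertion is vacuous for $n\le 3$; after dispatching a few small non-path trees directly from the explicit values of $\I$ and $\Ti$ (to seed the induction), I would assume $n$ large and that every non-path tree of smaller order obeys $\avi\ge a\cdot(\text{order})+b$. The extremality of the path is then read off at the end: for $n\ge 4$ one checks that $\max_{n\ge 4}c_n=c_5=\tfrac{\sqrt5}{2}-\tfrac{25}{26}<b$, so that $\avi(T)\ge an+b>an+c_n=\avi(P_n)$ for every non-path tree, while for $n\le 3$ the only tree is the path itself.

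For the inductive step I would fix a leaf $v$ at the end of a pendant path of maximal length, let $u$ be its neighbour, and set $r=\I(T-v)/\I(T)$, so that Proposition~\ref{chap2.Eq:muRec} reads
\begin{equation*}
\avi(T)=r\,\avi(T-v)+(1-r)\big(\avi(T-N[v])+1\big).
\end{equation*}
The essential quantitative input is that $r$ lies in a short interval. Applying Lemma~\ref{chap2.lem:quot} to the tree $T-v$ and its vertex $u$ gives $\I(T-N[v])/\I(T-v)=\I((T-v)-u)/\I(T-v)\in[\tfrac12,1)$, and since $u$ is not isolated in $T-v$ this ratio exceeds $\tfrac12$ strictly; hence $r=\bigl(1+\I(T-N[v])/\I(T-v)\bigr)^{-1}\in(\tfrac12,\tfrac23)$. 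This confinement is exactly what allows the second, ``$+1$'', term to compensate for the deficit of $a$ incurred when the first term passes from order $n$ to order $n-1$.

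The generic situation is that $T-v$ is again a non-path tree (for instance whenever $T$ has a second branch vertex, or the chosen pendant path has length at least $2$). Then the induction hypothesis yields $\avi(T-v)\ge a(n-1)+b$, while the forest $T-N[v]$ on $n-2$ vertices is bounded below by additivity of $\avi$ over components combined with the path formula of Lemma~\ref{chap2.lem:path} and the induction hypothesis on each component; concretely, every tree of order $s$ satisfies $\avi\ge as+c_2$, so a forest on $m$ vertices with $k$ components satisfies $\avi\ge am+kc_2$. Substituting these estimates and using $r\in(\tfrac12,\tfrac23)$ reduces the step to an elementary linear inequality in $r$, which goes through \emph{provided the forest term is large enough}.

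The main obstacle is precisely the boundary of this argument. The crude substitution is too weak exactly in the ``path-peeling'' regime, where $T-N[v]$ is (close to) a single path and $r$ is pushed near its golden-ratio value, and in the two degenerate structural situations: $T-v$ may be a path (the \emph{near-path} trees, obtained from a path by attaching a single pendant vertex or a short pendant path at an interior vertex), and $T-N[v]$ may consist only of very short paths (forcing $u$ to be a high-degree vertex with short legs, i.e.\ $T$ a spider or broom). I expect these low-branching families to need separate, direct treatment: one writes $\I$ and $\Ti$ for them explicitly in terms of Fibonacci numbers, extracts the asymptotics $\avi=an+(\text{constant}+o(1))$, and verifies the constant is at least $b$, with $b=(79\sqrt5-165)/70$ emerging as the extremal such constant. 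The crux of the proof—and the reason it is far more intricate than the argument for the star—is isolating the finitely many structural families on which the clean induction degenerates and controlling each by the explicit formulas together with the monotonicity and error estimates already furnished by Lemma~\ref{chap2.lem:path}.
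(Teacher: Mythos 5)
Your reduction of the final claim to the bound $\avi(T)\ge an+b$ together with $c_n\le c_5<b$ for $n\ge 4$ is correct and matches the paper. The inductive step, however, has a genuine gap, and it is not of the kind that can be patched by treating ``finitely many structural families'' separately. Write $k$ for the number of components of $T-N[v]$, i.e.\ $k=\deg_T(u)-1$. With your estimates $\avi(T-v)\ge a(n-1)+b$, $\avi(T-N[v])\ge a(n-2)+kc_2$ and $r\in(\tfrac12,\tfrac23)$, the linear function of $r$ is decreasing, so the worst case is $r\to\tfrac23$, where the bound becomes $an+\tfrac{2b}{3}+\tfrac13\bigl(1-4a+kc_2\bigr)$. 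Since $1-4a=\tfrac{2\sqrt5}{5}-1\approx-0.1056$ and $c_2\approx 0.1139$, the required inequality $1-4a+kc_2\ge b$ forces $k\ge 3$; for $k=1$ it fails even if you replace $c_2$ by $b$ (that would need $a\le\tfrac14$, but $a\approx 0.2764$). So the step does not close whenever the neighbour $u$ of the peeled leaf has degree $2$ or (in most subcases) $3$. This is not a boundary phenomenon: in any subdivided spider, any caterpillar with long legs, or indeed any non-path tree whose longest pendant path has length at least $2$, \emph{every} deepest leaf is attached to a degree-$2$ vertex, and no alternative choice of leaf escapes the problem. Consequently the class of trees you defer to ``explicit Fibonacci computations'' is essentially all trees, and the proposed completion is not viable. (One can check that closing the $k=1$ case would require the sharper bound $r\le 1/\phi$, i.e.\ $\I(T-N[v])/\I(T-v)\ge\phi-1$, which is false in general -- for a path ending at $u$ this ratio is $F_{m+1}/F_{m+2}$, which drops below $1/\phi$ for odd $m$.)

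The paper avoids this trap by centring the recursion not at a leaf but at a vertex $v$ of degree $k\ge 3$, decomposing $T-v$ into its branches $T_1,\dots,T_k$ and writing $\avi(T)=\rho\sum_j\avi(T_j)+(1-\rho)\bigl(1+\sum_j\avi(T_j-v_j)\bigr)$ with $\rho=\bigl(1+\prod_j\I(T_j-v_j)/\I(T_j)\bigr)^{-1}$. The key point is that the additive gain of $+1$ is then shared among $k\ge 3$ branches rather than having to absorb the loss of a full $a$ at every single peeling step; the case $k\ge 5$ is handled by splitting off one branch and comparing with $T'=T-T_k$, while $k=3,4$ reduce to a finite ($105$-case) check using explicit values of $\avi(T_j)$, $\avi(T_j-v_j)$ and $\I(T_j-v_j)/\I(T_j)$ for branches with at most three vertices. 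If you want to salvage a leaf-peeling argument you would need a substantially stronger induction hypothesis (e.g.\ one tracking $\I(T-v)/\I(T)$ or the structure near the deepest leaf), which is exactly the complexity the paper's branch decomposition is designed to sidestep.
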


\begin{proof}
We prove the inequality by induction on $n$. For $n \leq 3$, there is nothing to prove since the only trees with three or fewer vertices are paths. Thus assume now that $n \geq 4$, and consider a vertex $v$ of the tree $T$ whose degree is at least $3$ (which must exist if $T$ is not a path). Denote the neighbours of $v$ by $v_1,v_2,\ldots,v_k$ and the components of $T - v$ by $T_1,T_2,\ldots,T_k$ (in such a way that $v_j$ is contained in $T_j$). By Proposition \ref{chap2.Eq:muRec}, we have

\begin{align*}
\avi(T) &= \frac{\Ti(T)}{\I(T)} = \frac{\Ti(T-v) + (\I(T-N[v])+\Ti(T-N[v]))}{\I(T)} \\
&= \frac{\I(T-v)}{\I(T)} \cdot \frac{\Ti(T-v)}{\I(T-v)} + \frac{\I(T-N[v])}{\I(T)} \cdot \Big(1 + \frac{\Ti(T-N[v])}{\I(T-N[v])}\Big) \\
&=\frac{\I(T-v)}{\I(T)}  \avi(T-v) + \frac{\I(T) - \I(T-v)}{\I(T)} (1+ \avi(T-N[v])) \\
&=\frac{\I(T-v)}{\I(T)}  \sum_{j=1}^k \avi(T_j) + \Big( 1 - \frac{\I(T-v)}{\I(T)} \Big) \Big(1 + \sum_{j=1}^k \avi(T_j - v_j)\Big).
\end{align*}
Assume first that $k \geq 5$, and let $T' = T - T_k$ be the tree obtained by removing $T_k$ from $T$. Repeating the calculation, we also have
$$\avi(T') = \frac{\I(T'-v)}{\I(T')}  \sum_{j=1}^{k-1} \avi(T_j) + \Big( 1 - \frac{\I(T'-v)}{\I(T')} \Big) \Big(1 + \sum_{j=1}^{k-1} \avi(T_j - v_j)\Big).$$
For simplicity, let us introduce the notations $\rho = \frac{\I(T-v)}{\I(T)}$ and $\rho' = \frac{\I(T'-v)}{\I(T')}$. Note that
\begin{equation}\label{chap2.eq:rho}
\rho = \frac{\I(T-v)}{\I(T)} = \frac{\prod_{j=1}^k \I(T_j)}{\prod_{j=1}^k \I(T_j) + \prod_{j=1}^k \I(T_j-v_j)} = \frac{1}{1+ \prod_{j=1}^k \frac{\I(T_j-v_j)}{\I(T_j)}}
\end{equation}
and likewise
$$\rho' = \frac{1}{1+ \prod_{j=1}^{k-1} \frac{\I(T_j-v_j)}{\I(T_j)}},$$
so that Lemma~\ref{chap2.lem:quot} implies $\rho > \rho'$. 
Now we write
\begin{align*}
\avi(T) &= \rho \sum_{j=1}^k \avi(T_j) + (1-\rho) \Big(1 + \sum_{j=1}^k \avi(T_j - v_j)\Big) \\
&= \rho \avi(T_k) + (1-\rho)\avi(T_k-v_k) +\rho \sum_{j=1}^{k-1} \avi(T_j) \\&\quad+ (1-\rho) \Big(1 + \sum_{j=1}^{k-1} \avi(T_j - v_j)\Big)\\
&= \rho \avi(T_k) + (1-\rho)\avi(T_k-v_k) \\
&\quad+ \frac{1-\rho}{1-\rho'} \bigg( \rho' \sum_{j=1}^{k-1} \avi(T_j) + (1-\rho') \Big(1 + \sum_{j=1}^{k-1} \avi(T_j - v_j)\Big) \bigg) \\
&\quad+ \frac{\rho-\rho'}{1-\rho'} \sum_{j=1}^{k-1} \avi(T_j).
\end{align*}
By Lemma~\ref{chap2.lem:path} and the induction hypothesis, we have $\avi(T_j) \geq a|T_j| + \frac{1}{\sqrt{5}} - \frac13$ for all $j$. It follows that
\begin{align*}
\sum_{j=1}^{k-1} \avi(T_j) &\geq \sum_{j=1}^{k-1} \Big(a|T_j|+\frac{1}{\sqrt{5}} - \frac13\Big) = a(|T'|-1) +(k-1)\Big(\frac{1}{\sqrt{5}} - \frac13\Big) \\
&\geq a|T'|+4\Big( \frac{1}{\sqrt{5}} - \frac13 \Big) - a >  a|T'|+b.
\end{align*}
Moreover, the induction hypothesis gives us $\avi(T')\geq a|T'| + b$. Finally,
\begin{itemize}
\item If $|T_k| \geq 4$, then by the induction hypothesis, Lemmas ~\ref{chap2.lem:path} and \ref{chap2.lem:quot}, we have
\begin{align*}
&\rho \avi(T_k) + (1-\rho)\avi(T_k-v_k) \\
&\geq \rho \Big(a|T_k|+ \frac{2}{\sqrt{5}} - \frac34\Big)  + (1-\rho) \Big(a(|T_k|-1)+ \frac{2}{\sqrt{5}} - \frac34\Big) \\
&=a|T_k| + \frac{2}{\sqrt{5}} - \frac34 - (1-\rho)a \geq a|T_k| + \frac{2}{\sqrt{5}} - \frac34 - \frac{a}{2} > a|T_k|.
\end{align*}
\item If $|T_k| = 3$, then $\rho \avi(T_k) + (1-\rho)\avi(T_k-v_k) \geq \rho + (1-\rho) \cdot \frac23 = \frac{2+\rho}{3} \geq \frac56 > 3a$ (by Lemma~\ref{chap2.lem:quot}).
\item If $|T_k| = 2$, then $\rho \avi(T_k) + (1-\rho)\avi(T_k-v_k) = \rho \cdot \frac23 + (1-\rho) \cdot \frac12 = \frac{3+\rho}{6} \geq \frac7{12} > 2a$ (by Lemma~\ref{chap2.lem:quot}).
\item If $|T_k| = 1$, then $\rho \avi(T_k) + (1-\rho)\avi(T_k-v_k) = \rho \cdot \frac12 + (1-\rho) \cdot 0 = \frac{\rho}{2}$, and since $\frac{\I(T_k-v_k)}{\I(T_k)} = \frac12$ in this case, we have $\rho \geq \frac23$ by~\eqref{chap2.eq:rho}. Thus $\rho \avi(T_k) + (1-\rho)\avi(T_k-v_k) \geq \frac13 > a$. 
\end{itemize}
In conclusion, $\rho \avi(T_k) + (1-\rho)\avi(T_k-v_k) > a|T_k|$. Combining all inequalities, we obtain
\begin{align*}\avi(T) &>a|T_k| + \frac{1-\rho}{1-\rho'} ( a|T'|+b) + \frac{\rho-\rho'}{1-\rho'} ( a|T'|+b) \\&= a(|T'|+|T_k|) + b = a|T|+b.
\end{align*}
This completes the case that $k \geq 5$, so we are left with the cases $k=3$ and $k=4$. We return to the representation
\begin{equation}\label{chap2.eq:mrep}
\avi(T) = \rho \sum_{j=1}^k \avi(T_j) + (1-\rho) \Big(1 + \sum_{j=1}^k \avi(T_j - v_j)\Big).
\end{equation}
Now we distinguish different cases depending on how many of the branches $T_j$ have one, two or three vertices respectively. If $T_j$ has three vertices, we also distinguish whether $v_j$ is the centre vertex or a leaf of $T_j$. This gives us a total of $35$ cases for $k=3$ and $70$ cases for $k=4$, corresponding to the solutions of
$$x_1+x_2+x_3+x_4+x_5 = k.$$
Here, $x_1$ and $x_2$ stand for the number of $T_j$'s with one and two vertices respectively, $x_3$ and $x_4$ for the number of $T_j$'s with three vertices and $v_j$ the centre ($x_3$) or a leaf ($x_4$) respectively, and $x_5$ is the number of $T_j$'s with four or more vertices. In each of the cases, we use the following explicit values and estimates:
$$\avi(T_j) \begin{cases}
= \frac12 &|T_j| = 1, \\
= \frac23 &|T_j| = 2, \\
= 1 &|T_j| = 3, \\
\geq a|T_j| + \frac{2}{\sqrt{5}} - \frac34 & \text{otherwise,}
\end{cases}$$
$$\avi(T_j-v_j) \begin{cases}
= 0 &|T_j| = 1, \\
= \frac12 &|T_j| = 2, \\
= \frac23 &|T_j| = 3 \text{ and $v_j$ is a leaf of $T_j$}, \\
= 1 &|T_j| = 3 \text{ and $v_j$ is the centre of $T_j$}, \\
\geq a(|T_j|-1) + \frac{2}{\sqrt{5}} - \frac34 & \text{otherwise,}
\end{cases}$$
$$\frac{\I(T_j-v_j)}{\I(T_j)} \begin{cases}
= \frac12 &|T_j| = 1, \\
= \frac23 &|T_j| = 2, \\
= \frac35 &|T_j| = 3 \text{ and $v_j$ is a leaf of $T_j$}, \\
= \frac45 &|T_j| = 3 \text{ and $v_j$ is the centre of $T_j$}, \\
\in [\frac12,1] & \text{otherwise.}
\end{cases}$$
The bounds on $\avi$ in the case that $T_j$ has four or more vertices follow from the induction hypothesis (if $T_j - v_j$ is disconnected, applied to all components), while the last inequality is simply Lemma~\ref{chap2.lem:quot}.

We plug these bounds into~\eqref{chap2.eq:mrep} and also use the identity~\eqref{chap2.eq:rho} again. Since the expression~\eqref{chap2.eq:mrep} is linear in $\rho$, its minimum is either attained for the largest or smallest possible value of $\rho$. This gives us a lower bound for $\avi(T)$ in each of the aforementioned $105$ cases, which can all be checked easily with a computer.  As an example, let us consider the case that gives us the worst bound: it is obtained for $x_1=x_3=x_4=0$, $x_2=1$ and $x_5=2$ (i.e., one branch with two vertices, two branches with four or more vertices). Let $T_1$ and $T_2$ both have four or more vertices, so that the third branch $T_3$ consists of only two vertices. We have
\begin{align*}
\avi(T_1) &\geq a|T_1|+ \frac{2}{\sqrt{5}} - \frac34,\\
\avi(T_2) &\geq a|T_2|+ \frac{2}{\sqrt{5}} - \frac34,\\
\avi(T_3) &= \frac23
\end{align*}
and
\begin{align*}\avi(T_1-v_1) &\geq a|T_1| - a + \frac{2}{\sqrt{5}} - \frac34,\\
 \avi(T_2-v_2) &\geq a|T_2| - a + \frac{2}{\sqrt{5}} - \frac34,\\ \avi(T_3-v_3) &= \frac12
\end{align*}
as well as
$$\rho = \frac{1}{1+ \frac23 \frac{\I(T_1-v_1)\I(T_2-v_2)}{\I(T_1)\I(T_2)}} \in \Big[ \frac{3}{5}, \frac{6}{7} \Big]$$
by Lemma \ref{chap2.lem:quot}. Thus
\begin{align*}
\avi(T_1)+\avi(T_2)+\avi(T_3) &\geq a(|T_1|+|T_2|) + 2 \Big( \frac{2}{\sqrt{5}} - \frac34 \Big) + \frac{2}{3} \\
&= a(|T|-3) + \frac{4}{\sqrt{5}} - \frac56 \\
&= a|T| + \frac{11}{2\sqrt{5}} - \frac{7}{3}
\end{align*}
and likewise
\begin{align*}
\avi(T_1-v_1)+\avi(T_2-v_2)+\avi(T_3-v_3) &\geq a(|T_1|-1+|T_2|-1) + 2 \Big( \frac{2}{\sqrt{5}} - \frac34  \Big) + \frac12 \\
&= a(|T|-5) + \frac{4}{\sqrt{5}} - 1  \\
&= a|T| +\frac{13}{2\sqrt{5}} - \frac{7}{2}.
\end{align*}
Plugging all these inequalities into~\eqref{chap2.eq:mrep}, we obtain
\begin{align*}
\avi(T) &\geq \rho \Big( a|T| + \frac{11}{2\sqrt{5}} - \frac{7}{3} \Big) + (1-\rho) \Big(1 + a|T| + \frac{13}{2\sqrt{5}} - \frac{7}{2} \Big) \\
&= a|T| + \frac{13}{2\sqrt{5}} - \frac{5}{2} + \rho \Big( \frac16 - \frac{1}{\sqrt{5}} \Big) \geq a|T| + \frac{13}{2\sqrt{5}} - \frac{5}{2} + \frac67
\Big( \frac16 - \frac{1}{\sqrt{5}} \Big) \\
&= a|T| + b.
\end{align*}
The other cases are treated in the same fashion and give lower bounds with larger constant terms. To complete the proof of the theorem, it only remains to prove an upper bound on $\avi(P_n)$. However, we already know from Lemma~\ref{chap2.lem:path} that
\begin{align*}
\avi(P_n)  &= an +\frac{3-\sqrt{5}}{5} - \frac{n+2}{\sqrt{5}((-\phi^2)^{n+2}-1)}\\& \leq an +\frac{3-\sqrt{5}}{5} - \frac{7}{\sqrt{5}((-\phi^2)^{7}-1)} = an + \frac{\sqrt{5}}{2} - \frac{25}{26}
\end{align*}
for $n > 3$, and $\frac{\sqrt{5}}{2} - \frac{25}{26} \approx 0.15649553 < b$. Therefore, $\avi(P_n) < an + b \leq \avi(T)$ for every tree $T$ with $n$ vertices other than $P_n$. This completes the proof.
\end{proof}

\section{A more general setting}

It is common in statistical physics to consider the hard-core distribution on the independent sets $I$ of a graph $G$. That is, the study of a random independent set $I$ with probability proportional to $\A^{|I|}$. In \cite{davies2018average, davies2017independent}, the authors consider this model and prove bounds on the expected size of an independent set drawn from the hard-core model on $G$ at fugacity $\A$. When $\alpha = 1$, this expected size is precisely the invariant $\avi$ that we investigated in this paper.
Recall that $\ii(G,k)$ is the number of independent vertex subsets of size $k$ in $G$. Now, choose a random independent set with probability proportional to $\alpha^k$, where $k$ is the size of the set and $\A$ is a positive number. We define the weighted total number of independent subsets of $G$, the weighted total size of independent subsets of $G$ and the weighted average size of independent vertex subsets in $G$:

\begin{align*}
I^{\A}(G) &= \I(G,\A)=\sum_{k\geq 0}\ii(G,k)\A^k,\\
T^{\A}(G) &= \Ti(G,\A)=\sum_{k\geq 0}k\ii(G,k)\A^k,\\
\avi^{\A}(G) &=\frac{\Ti(G,\A)}{\I(G,\A)}.
\end{align*}

\begin{exa}
For the $n$-vertex edgeless graph $E_n$ and the star $S_n$, we have
\begin{align*}
\I^{\A}(E_n)&=\sum_{k= 0}^n\binom{n}{k}\A^k=(1+\A)^{n},\quad \I^{\A}(S_n)=\A+\sum_{k= 0}^{n-1}\binom{n-1}{k}\A^k=\A+(1+\A)^{n-1},\\
\Ti^{\A}(E_n)&=\sum_{k=0}^n\binom{n}{k}k\A^k=\A n(1+\A)^{n-1},\quad \Ti^{\A}(S_n)=\A+\A(n-1)(1+\A)^{n-2},\\
\avi^{\A}(E_n)&=\frac{\A n}{1+\A}, \quad \avi^{\A}(S_n)=\frac{\A+\A(n-1)(1+\A)^{n-2}}{\A+(1+\A)^{n-1}}.
\end{align*}
\end{exa}

All the results presented in this paper, except for the extremality of the path, generalise to this weighted average. The proof that the path is extremal generalises to the case that $\A \in (0,1]$, but not to all real values of $\A$ (in fact, the path is not extremal for large values of $\A$). 
To some extent, this also explains why proving extremality of the path is harder than proving extremality of the star.
We refer to \cite{Valisoa} for more details.

\end{document}